\newcommand{\C}{\mathbb{C}}
\newcommand{\N}{\mathbb{N}}
\newcommand{\Z}{\mathbb{Z}}
\newcommand{\Q}{\mathbb{Q}}
\newcommand{\F}{\mathbb{F}}
\newtheorem{conj}{Conjecture}
\newtheorem{lemma}{Lemma}
\newtheorem{thm}{Theorem}
\newtheorem{cor}{Corollary}
\begin{document}

\title{An application of a generalization of Artin's primitive root conjecture in the theory of monoid rings}

\author{R.\! C.\! Daileda}


\maketitle

\begin{abstract}
Using techniques of algebraic and analytic number theory, we resolve a question on monoid rings posed by Kulosman, et. al., under the assumption of the Generalized Riemann Hypothesis (GRH). Specifically, we show that under an appropriate GRH, for any (rational) prime $p$ the set $E(p)= \{q \text{ prime} \, | \, X^q-1 \text{ factors in } \F_p[X;M] \} $, where $M=\langle 2,3 \rangle = \N_0 \setminus \{ 1 \}$, contains a subset with positive natural density. In particular $E(p) \ne \varnothing$. This proves that $M$ is not a so-called ``Matsuda monoid'' of any positive type. For $p=2,3$ this was observed by Kulosman, who provided factorizations of $X^7-1$ and $X^{11}-1$ in $\F_2[X;M]$ and $\F_3[X;M]$, respectively. Our results explain and reproduce both of these factorizations, as well.
\end{abstract}

\section{Introduction}

Let $\Gamma$ be a commutative monoid, written additively.  We say that $\Gamma$ is {\em torison-free} if for every $n \in \N$ and every $\alpha, \beta \in \Gamma$, the equation $n \alpha = n \beta$ implies that $\alpha = \beta$.  If $\Gamma$ is torsion-free, $\alpha \in \Gamma$ and $n \in \N$, the equation $n x = \alpha$ has at most one solution in $\Gamma$.  If $px = \alpha$ has {\em no} solution in $\Gamma$ for every prime $p$, the authors of \cite{cgk} say that $\alpha$ has {\em height} $(0,0,\ldots)$.  If this is the case and $n \ge 2$ is an integer, choose any prime $p$ dividing $n$.  Then $nx = p(kx)$ for some $k \in \N$.  Since $px = \alpha$ has no solution in $\Gamma$, this implies that $nx = \alpha$ doesn't either.  We conclude that $\alpha$ has height $(0,0,\ldots)$ if and only if for every $n \ge 2$ the equation $n x = \alpha$ has no solution in $\Gamma$.  We prefer to term such an $\alpha$ to be {\em indivisible}.  One advantage of this formulation is that it has a simple positive expression:  $\alpha \in \Gamma$ is indivisible if and only if $n x = \alpha$ with $n \in \N$ and $x \in \Gamma$ implies $n=1$ and $x=\alpha$.

Given an additive monoid $\Gamma$ and a field $F$, let $F[X;\Gamma]$ denote the {\em monoid ring} of all polynomials in $X$ with exponents in $\Gamma$ and coefficients in $F$ (for a more careful definition and other essential properties of monoid rings, see \cite{gilmer}).  Given a nonzero $\pi \in \Gamma$, consider the binomial $X^{\pi} - 1 \in F[X;\Gamma]$.  If $\pi = n \alpha$ for some $n \ge 2$ and $\alpha \in \Gamma$, note that we have the factorization
\begin{equation}\label{first}
X^{\pi} - 1 = (X^{\alpha} - 1)\sum_{j=0}^{n-1} X^{j \alpha}.
\end{equation}
If $\Gamma$ is torsion-free and {\em cancellative} ($\alpha + \beta = \alpha + \gamma$ implies $\beta = \gamma$), \cite[Theorem 11.1]{gilmer} implies that neither factor on the right-hand side of \eqref{first} is a unit in $F[X;\Gamma]$, which shows that $X^{\pi}-1$ is reducible (not a unit and not irreducible) in $F [X; \Gamma]$.  Equivalently, if $X^{\pi}-1$ is irreducible in $F[X;\Gamma]$, then $\pi$ must be indivisible.  We are interested in conditions under which the converse holds.  Specifically, we say that a commutative, torsion-free, cancellative monoid $\Gamma$ is a {\em Matsuda monoid} provided that for every field $F$ and every indivisible $\pi \in \Gamma$, $X^{\pi}-1$ is irreducible in $F[X;\Gamma]$.  A somewhat weaker condition is that this condition hold for all fields $F$ of a given characteristic $p \ge 0$, and in this case we say that $\Gamma$ is a {\em Matsuda monoid of type $p$}.  

Matsuda \cite{matsuda} showed that every torsion free abelian group is a Matsuda monoid.  And according to \cite[Theorem 4.1]{cgk}, every submonoid of $\Q_0^{+}$ is a Matsuda monoid of type 0.  In positive characteristic, however, much less is known.  If
\[
M = \langle 2, 3 \rangle = \N_0 \setminus \{ 1 \}
\]
is the submonoid of $\N_0$ generated by $2$ and $3$, the indivisible elements of $M$ are precisely the prime numbers.  $M$ is {\em not} a Matsuda monoid of type 2 or 3.   This was observed in \cite{cgk}:  one has
\[
X^7 - 1 = (X^3 + X^2 + 1)(X^4 + X^3 + X^2 + 1)
\]
in $\F_2[X;M]$ and
\[
X^{11}-1 = (X^6 - X^5 - X^4 - X^3 + X^2 + 1)(X^5 + X^4 - X^3 + X^2 - 1)
\]
in $\F_3[X;M]$.  Although easily verified, these factorizations appear somewhat mysterious.  As we will see in Section \ref{easy}, they arise due to the fact that 2 and 3 are {\em not} primitive roots modulo 7 and 11, respectively.  We will explain and compute both factorizations in Section \ref{easy}.  

For an arbitrary positive (prime) characteristic $p$, we call a prime $q$ {\em exceptional} for $p$ if $X^q-1$ is reducible in $\F_p[X;M]$, and we let $E(p)$ denote the set of primes that are exceptional for $p$.  The set $E(p)$ characterizes the failure of the Matsuda property for $M$ in characteristic $p$.  It is natural, then, to wonder what can be said about the density of $E(p)$ in the set of all primes.  Our first main result shows that, subject to GRH, both $E(2)$ and $E(3)$ have positive natural densities.  

\begin{thm}\label{main1}
Let $p \in \{ 2, 3\}$ and assume that for every squarefree $n \in \N$ the Dedekind zeta function of $\Q(\sqrt[n]{1}, \sqrt[n]{p})$ satisfies the generalized Riemann hypothesis.  Then the set
\[
E(p) = \{q \text{ prime} \, | \, X^q - 1 \text{ reducible in } \F_p[X;M]  \} 
\]
has nonzero natural density $1-A$, where $A$ denotes Artin's constant
\[
\prod_{\ell \text{ prime} } \left( 1 - \frac{1}{\ell(\ell-1)} \right).
\]
\end{thm}

The techniques used to study $E(2)$ and $E(3)$ can't be applied to $E(p)$ in general.  But a modified approach allows us to treat the subset $E_2(p)$ consisting of those members of $E(p)$ for which $\langle p \rangle$ has index 2 in $(\Z/q\Z)^{\times}$.  Specifically, we will prove:

\begin{thm}\label{main2}
Let $p$ be an odd rational prime, and assume that for every odd squarefree $n \in \N$ the Dedekind zeta functions of the fields $\Q(\sqrt[n]{1}, \sqrt[n]{p})$ and $\Q(\sqrt[4n]{1}, \sqrt[4n]{p})$ satisfy the generalized Riemann hypothesis.  Then the set 
\[
E_2(p) = \big\{ q \text{ prime} \, \big| \, X^q - 1 \text{ is reducible in } \F_p[X;M]  \text{ and }  [(\Z/q\Z)^{\times}: \langle p \rangle] = 2 \big\}
\]
has nonzero natural density $a(p)$ given by
\[
a(p) = \frac{3}{4} \left( \frac{2p-1}{p^2 - p - 1}\right) A,
\]
where $A$ is Artin's constant.
\end{thm}

Taken together these results imply:

\begin{cor}
Assume that for every prime $p$ and every squarefree $n \in \N$, the Dedekind zeta functions of the fields $\Q(\sqrt[n]{1}, \sqrt[n]{p})$ and $\Q(\sqrt[4n]{1}, \sqrt[4n]{p})$ satisfy the generalized Riemann hypothesis.  Then $E(p)$ is infinite for all $p$.  In particular, the monoid $M = \langle 2, 3 \rangle$ is not a Matsuda monoid of any positive type.
\end{cor}

Our proof proceeds as follows.  In Section \ref{reverse} we show that the reducibility of $X^q-1$ in $\F_p[X;M]$ is equivalent to the vanishing of the trace coefficient of any factor of the image of the $q$th cyclotomic polynomial $\Phi_q$ in $\F_p[X]$.  In Section \ref{easy} we show that when $p=2,3$, {\em any} factorization of $\Phi_q$ in $\F_p[X]$ must necessarily have such a factor, which reduces the reducibility of $X^q-1$ in $\F_p[X;M]$ to the condition that $p$ fail to be a primitive root modulo $q$.  The (common) densities of $E(2)$ and $E(3)$ then follow at once from Artin's primitive root conjecture, which is known to be true under an appropriate GRH \cite{hooley}.  

When $p \ge 3$ the set $E(p)$ becomes more difficult to describe explicitly, so in Section \ref{fields} we partition $E(p)$ according to the value of the index $r=[(\Z/q\Z)^{\times} : \langle p \rangle]$, which when $r=2$ allows us to limit our search for ``traceless'' factors of $\Phi_q$ in $\F_p[X]$ to those factors that are also irreducible.  The existence of these types of factors is equivalent to a certain divisibility condition in the ring of integers of the $q$th cyclotomic field $K_q = \Q(\zeta_q)$.  When $r=2$ we are able to take advantage of the explicit description of the unique quadratic subfield of $K_q$ as $\Q(\sqrt{q^{\ast}})$ to convert this divisibility condition to the simple pair of congruences $q \equiv \pm 1 \!\! \pmod{4p}$.  This density of $E_2(p)$ is then given by a generalized version of Artin's conjecture \cite{lenstra, cw}, which is described in Section \ref{genartin}.  The nonvanishing of this density is the subject of Section \ref{apnonzero}.  Section \ref{remarks} includes computational evidence for our GRH conditional results, an example when $p=5$, and remarks on forthcoming work.  For other generalizations of Artin's original primitive root conjecture and their applications, see for instance \cite{ck, cw, cm, gold1, gold2, gold3, matt, moll, moree, murata, mp, q1, q2, sam, stadnik, wag, wein}.

\section{Reversal and Traces}\label{reverse}

Fix a prime $p$.  It will be convenient to view $\F_p[X;M]$ as a subring of $\F_p[X]$.  Given $g \in \F_p[X]$ we write
\[
g(X) = \sum_{j=0}^{m} a_j X^j
\]
with $m = \deg g$, so that $a_m \ne 0$.  Define the {\em reversal operator} $R : \F_p[X] \to \F_p[X]$ by
\[
R(g)(X)= \sum_{j=0}^m a_{m-j} X^j.
\]
Note that $\deg(R(g)) = \deg g$ if and only if $X \nmid g$, and in this case $R^2(g) = g$.  We also have
\[
R(g)(X) = X^{\deg g} g\left( \frac{1}{X}\right),
\]
which implies that $R$ is multiplicative.  

If $m=\deg g \ge 1$, we define the {\em trace} of $g$ to be
\[
\operatorname{Tr}(g) = - \frac{a_{m-1}}{a_m}.
\]
The reader can easily verify that for any nonconstant $g,h \in \F_p[X]$ and any $a \in \F_p^{\times}$ one has:
\begin{align}
\operatorname{Tr}(ag) &= \operatorname{Tr}(g); \label{T1}\\
\operatorname{Tr}(gh) &= \operatorname{Tr}(g) + \operatorname{Tr}(h). \label{T2}
\end{align}
Properties \eqref{T1} and \eqref{T2} suggest that we define $\operatorname{Tr}(a) = 0$ for any $a \in \F_p^{\times}$, but we prefer to leave the trace of a constant polynomial undefined.  The field theoretic trace is connected to the polynomial trace $\operatorname{Tr}(g)$ as follows.  If $g$ is irreducible in $\F_p[X]$ and $\alpha$ is any root of $g$ in an extension of $\F_p$, then
\begin{equation}\label{tt}
\operatorname{Tr}(g) = \operatorname{Tr}_{\F_p[\alpha] / \F_p} (\alpha).
\end{equation}
If $\deg g = f$, then $\F_p[\alpha]/\F_p$ is a Galois extension of degree $f$ whose Galois group is generated by the {\em Frobenius automorphism} $\alpha \mapsto \alpha^p$.  By \eqref{tt} we therefore have
\begin{equation}\label{tt1}
\operatorname{Tr}(g) = T_{p,f}(\alpha),
\end{equation}
where
\[
T_{p,f}(X) = \sum_{j=0}^{f-1} X^{p^j} \in \F_p[X].
\]

We will say that $g \in \F_p[X]$ is {\em traceless} if $\operatorname{Tr}(g) = 0$.  By definition a traceless polynomial must have positive degree.  Let
\[
\begin{aligned}
\mathcal{F}_1 &= \{ g \in \F_p[X;M] \, : \, g \text{ is nonconstant and } X \nmid g \}, \\
\mathcal{F}_2 &= \{ g \in \F_p[X] : g \text{ is traceless and } X \nmid g \}.
\end{aligned}
\]
It is easy to see that $R : \mathcal{F}_1 \to \mathcal{F}_2$ is an involution.  For any prime $q$ we have
\[
R(X^q - 1) = 1 - X^q = - (X^q - 1).
\]
Because $X \nmid X^q - 1$ and $R$ is multiplicative, it follows that $X^q - 1$ factors in $\mathcal{F}_1$ if and only if it factors in $\mathcal{F}_2$.  This proves:

\begin{lemma}\label{t1}
For any primes $p$ and $q$, $X^q - 1$ is reducible in $\F_p[X;M]$ if and only if it factors as the product of traceless polynomials in $\F_p[X]$.
\end{lemma}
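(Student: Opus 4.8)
The plan is to transport a factorization across the involution $R$ using the three facts already in hand: that $R$ is multiplicative, that $R$ restricts to an involution $\mathcal{F}_1 \to \mathcal{F}_2$, and that $R(X^q-1) = -(X^q-1)$. There is no deep content to invent; the real work is to match the two notions of ``nontrivial factorization'' against the membership conditions defining $\mathcal{F}_1$ and $\mathcal{F}_2$, and to keep track of the harmless unit $-1$.

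First I would pin down what a nontrivial factorization in $\F_p[X;M]$ looks like. Viewing $\F_p[X;M]$ as a subring of $\F_p[X]$, if $fg = 1$ then $f$ and $g$ are units of $\F_p[X]$, hence constants, so the units of $\F_p[X;M]$ are exactly $\F_p^\times$. Thus a nontrivial factorization $X^q - 1 = fg$ is one in which both $f$ and $g$ are nonconstant. Moreover, since the constant term of $X^q-1$ is $-1 \ne 0$ and the constant term of a product is the product of the constant terms, both $f$ and $g$ must have nonzero constant term, i.e. $X \nmid f$ and $X \nmid g$. Together these two observations say precisely that $f, g \in \mathcal{F}_1$.

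Next I would apply $R$. By multiplicativity, $-(X^q - 1) = R(X^q-1) = R(f)\,R(g)$, and since $R$ carries $\mathcal{F}_1$ into $\mathcal{F}_2$, both $R(f)$ and $R(g)$ are traceless. Absorbing the unit $-1$ into one factor—tracelessness survives multiplication by a nonzero scalar, since this disturbs neither the leading nor the subleading coefficient—exhibits $X^q - 1$ as a product of traceless polynomials. For the converse I would run the same argument through $R^2 = \mathrm{id}$ (valid because $X \nmid X^q-1$): a factorization $X^q - 1 = g_1 g_2$ into traceless polynomials has $g_1, g_2 \in \mathcal{F}_2$, because tracelessness forces positive degree and $X \nmid X^q-1$ forces $X \nmid g_i$, so applying $R$ and again absorbing $-1$ returns a factorization into nonconstant elements of $\F_p[X;M]$, which is nontrivial.

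I expect the most delicate step, rather than a genuine obstacle, to be the bookkeeping of the second paragraph: confirming that ``nontrivial in $\F_p[X;M]$'' is equivalent to ``both factors lie in $\mathcal{F}_1$.'' This is exactly where the hypotheses are used—that $q$ is prime, so $X^q-1$ has nonzero constant term, together with the identification of the unit group—and it is the only place where one could mismatch the two statements. The single other point to watch is the scalar $-1$, which is a unit and hence affects neither the nontriviality of a factorization nor the tracelessness of its factors; having noted this once, both directions follow formally from the established properties of $R$.
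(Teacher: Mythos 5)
Your proposal is correct and follows the paper's own argument: transport the factorization through the reversal operator $R$, using its multiplicativity, the involution $R : \mathcal{F}_1 \to \mathcal{F}_2$, and the identity $R(X^q-1) = -(X^q-1)$. The paper states this equivalence in one line and leaves the bookkeeping implicit; you have simply supplied the details (identifying the units of $\F_p[X;M]$, matching ``nontrivial'' with membership in $\mathcal{F}_1$ and ``traceless'' with $\mathcal{F}_2$, and absorbing the unit $-1$), all of which are exactly right.
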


Because $X^p-1 = (X-1)^p$ in $\F_p[X]$, the nontrivial (monic) factors of $X^p-1$ in $\F_p[X]$ all have the form $(X-1)^k$ for some $1 \le k <p$. Since \eqref{T2} implies $\operatorname{Tr}((X-1)^k) = k \operatorname{Tr}(X-1) = k$, we find that $X^p-1$ {\em is not} the product of traceless polynomials in $\F_p[X]$.  Therefore $X^p-1$ is irreducible in $\F_p[X;M]$ by Lemma \ref{t1}, so that $p \not \in E(p)$.  Similarly, $2 \not \in E(p)$ for any $p$.  Hence, when trying to determine whether or not $q \in E(p)$, we may assume that $q$ is odd and different from $p$.

Let
\[
\Phi_q(X) = \sum_{j=0}^{q-1}X^j
\]
denote the $q$th cyclotomic polynomial, which can be considered as a member of $\F_p[X]$ for any $p$.  Suppose $\Phi_q = gh$ in $\F_p[X]$ with $g$ traceless.  Multiplying by $X-1$ we obtain $X^q - 1 = g \widetilde{h}$.  Because $X^q-1$ and $g$ are both traceless, so is $\widetilde{h}$ by \eqref{T2}.  Applying Lemma 1 we conclude that if $\Phi_q$ has a traceless factor in $\F_q[X]$, then $X^q-1$ factors nontrivially in $\F_q[X;M]$.  Conversely, if $X^q-1$ is reducible in $\F_p[X;M]$, then by Lemma \ref{t1} we can write $X^q-1 = gh$ with $g, h \in \F_q[X]$ traceless.   Because $X-1$ is prime in $\F_p[X]$ and divides $X^q-1$, without loss of generality we have $h = (X-1) \tilde{h}$ for some $\tilde{h} \in \F_p[X]$.  Cancelling the common factor of $X-1$ we obtain $\Phi_q = g \tilde{h}$ with $g$ traceless.  This proves:

\begin{lemma}\label{t2}
For any primes $p \ne q$, $X^q-1$ is reducible in $\F_p[X;M]$ if and only if $\Phi_q$ has a traceless factor in $\F_p[X]$.
\end{lemma}

Write $\Phi_q = g_1 \cdots g_r$ with $g_j \in \F_p[X]$ irreducible.  In order for $\Phi_q$ to have a traceless factor in $\F_p[X]$, it is necessary and sufficient that there exist a nonempty $J \subset \{ 1, \ldots , r \}$ so that $\prod_{j \in J} g_j$ is traceless.  It will therefore be useful to recall a few basic facts about the irreducible factorization of $\Phi_q$ in $\F_p[X]$.  For $q \ne p$, let $f(p,q)$ denote the order of the subgroup $\langle p \rangle$ generated by the image of $p$ in $(\Z/q\Z)^{\times}$.   Then it is well known that $\Phi_q$ has exactly $r(p,q) = \frac{q-1}{f}=[(\Z/q\Z)^{\times}:\langle p \rangle]$ distinct irreducible factors in $\F_p[X]$, each of multiplicity 1 and degree $f$.  It follows that if $\zeta$ is a root of any $g_j$ in an extension of $\F_p$, then $[\F_p[\zeta] : \F_p] = f$.  The extension $\F_p[\zeta] / \F_p$ is necessarily Galois and its Galois group is generated by the {\em Frobenius automorphism} $\alpha \mapsto \alpha^p$.  By equation \eqref{tt1} we therefore have
\begin{equation}\label{tt3}
\operatorname{Tr}(g_j) =  T_{p,f}(\zeta).
\end{equation}

\section{The Densities of $E(2)$ and $E(3)$}\label{easy}

Because the fields $\F_2$ and $\F_3$ are particularly small, we can give an alternate characterization of the sets $E(2)$ and $E(3)$ which will make it possible to compute their GRH-conditional densities using Artin's (original) primitive root conjecture and work of Hooley \cite{hooley}.  For $a \in \N$ let $\mathcal{S}(a)$ denote the set of primes for which $a$ is a primitive root.  We will prove:

\begin{lemma}\label{boom}
We have
\[
E(2) = \mathcal{P} - \mathcal{S}(2) \,\,\, \text{ and } \,\,\, E(3) = \mathcal{P} - \mathcal{S}(3).
\]
\end{lemma}

In other words, $E(2)$ consists of those primes for which 2 {\em fails} to be a primitive root, and likewise for $E(3)$.  According to Artin's primitive root conjecture, for every prime $p \not \equiv 1 \pmod{4}$ one expects to have $d(\mathcal{S}(p)) = A$, where
\[
A = \prod_{\ell \text{ prime}} \left( 1 - \frac{1}{\ell (\ell - 1)}\right) = 0.3739 \ldots
\]
is {\em Artin's constant}.  This would mean that
\begin{equation}\label{d2}
d(E(2)) = d(E(3)) = 1-A = 0.6260\ldots,
\end{equation}
a conclusion which is in close agreement with the computationally obtained results
\[
\frac{\# E(2) \cap [1,10^6]}{\# \mathcal{P} \cap [1,10^6]} = 0.6262\ldots \,\,\, \text{ and } \,\,\, \frac{\# E(3) \cap [1,10^6]}{\# \mathcal{P} \cap [1,10^6]} = 0.6255\ldots
\]
Hooley \cite{hooley} showed that when $a \ne \pm 1, \square$, Artin's conjecture for $\mathcal{S}(a)$ is true, provided one assumes that the zeta functions of certain Kummerian extensions of $\Q$ obey GRH.  A careful reading of \cite{hooley} shows that when $a=p$, it is sufficient to assume GRH only for the zeta functions of the fields $\Q(\sqrt[n]{1}, \sqrt[n]{p})$, where $n \in \N$ is squarefree.  It is now a simple matter to deduce Theorem \ref{main1}.

\begin{proof}[Proof of Theorem \ref{main1}]
In view of Lemma \ref{boom} and the remarks above, it suffices only to observe that $A<1$.
\end{proof}

To prove Lemma \ref{boom}, we begin with a simple observation.  If $q$ is prime and $\Phi_q = gh$ with $g,h \in \F_2[X]$ of positive degree, then without loss of generality $g$ must be traceless.  This is because we have
\[
1 = -1 = \operatorname{Tr}(\Phi_q) = \operatorname{Tr}(g) + \operatorname{Tr}(h)
\]
by \eqref{T2}, and we are working in $\F_2$.  This means that $\Phi_q$ has a traceless factor in $\F_2[X]$ if and only if $\Phi_q$ is reducible in $\F_2[X]$.  Since the number of irreducible factors of $\Phi_q$ is precisely $\frac{q-1}{f}$, the latter occurs if and only if $f < q-1$.  That is, $\Phi_q$ is reducible in $\F_2[X]$ if and only if $2$ is {\em not} a primitive root modulo $q$.  Thus, by Lemma \ref{t2},
\[
E(2) = \mathcal{P} -\mathcal{S}(2).
\]

In order to handle $E(3)$ we introduce another lemma.

\begin{lemma}\label{t5}
Let $p \ne q$ be primes and let $a \in \F_p$.  Then every irreducible factor of $\Phi_q$ in $\F_p[X]$ has trace equal to $a$ if and only if $\Phi_q$ is irreducible in $\F_p[X]$ and $a=-1$.
\end{lemma}

\begin{proof}
Since $\operatorname{Tr}(\Phi_q) = -1$, just the ``only if'' implication requires proof.  As above, write $\Phi_q = g_1 \cdots g_r$ with each $g_i \in \F_p[X]$ irreducible of degree $f$ and $r = \frac{q-1}{f}$.  Suppose that $\operatorname{Tr}(g_j) = a$ for all $j$.  If $\zeta \ne 1$ is a primitive $q$th root of unity over $\F_p$, then $\Phi_q(\zeta) = 0$, which implies $g_j(\zeta) = 0$ for some $j$.  Then by \eqref{tt3} we have
\[
a =  \overline{T_{p,f}}(\zeta),
\]
where $\overline{T_{p,f}} \in \F_p[X]$ is the polynomial obtained by replacing each exponent in $T_{p,f}$ with its least positive residue modulo $q$.  Therefore, $\zeta$ is a root of $\overline{T_{p,f}} - a \in \F_p[X]$, and hence $g_j$ divides $\overline{T_{p,f}} - a$ in $\F_p[X]$.  Because $\zeta$ was arbitrary and the $g_j$ are distinct, it follows that $\Phi_q$ divides $\overline{T_{p,f}} -  a$ in $\F_p[X]$.  But $\deg(\Phi_q)=q-1$ and $\deg (\overline{T_{p,f}} -a) \le q-1$, so we must have $\Phi_q = \overline{T_{p,f}} - a$.  Therefore $a=-1$ and 
\[
\sum_{j=1}^{q-1} X^j = \sum_{k=0}^{f-1} X^{p^k \!\!\!\!\!\! \pmod{q}}.
\]
Comparing the monomials that appear on each side of this equality, we see that the powers of $p$ modulo $q$ must exhaust the set $\{1, 2, \ldots , q-1 \}$.  That is, $p$ is a primitive root modulo $q$, so that $f=q-1$ and $\Phi_q$ is irreducible in $\F_p[X]$.  This completes the proof.
\end{proof}

Now let $q \ne 3$ be a prime for which 3 fails to be a primitive root.  Then $\Phi_q$ has $r = \frac{q-1}{f} > 1$ irreducible factors in $\F_3[X]$.  If one of them is traceless, then $q \in E(3)$ by Lemma \ref{t3}.  Otherwise, by Lemma \ref{t5}, there must be distinct irreducible factors $g_1, g_2 \in \F_3[X]$ of $\Phi_q$ satisfying $\operatorname{Tr}(g_1)=1$ and $\operatorname{Tr}(g_2)=-1$.  Then $g_1 g_2$ is a traceless factor of $\Phi_q$ in $\F_3[X]$, and $q \in E(3)$ by Lemma \ref{t2}. This shows that $E(3) = \mathcal{P} - \mathcal{S}(3)$ and finishes the proof of Lemma \ref{boom}.

Using Lemma \ref{boom} one quickly finds that
\[
\begin{split}
E(2) &= \{7, 17, 23, 31, 41, 43, 47, 71, 73, 79, 89, 97, \ldots \},\\
E(3) &= \{11, 13, 23, 37, 41, 47, 59, 61, 67, 71, 73, 83, 97, \ldots \}.
\end{split}
 \]
For a prime $q$ in either of these sets, the proofs of Lemmas \ref{t1}, \ref{t2} and \ref{boom} yield an algorithm for producing the corresponding factorization of $X^q-1$.  When $p=2$, we need only factor $\Phi_q$ in $\F_2[X]$, multiply by $X-1$ and reverse coefficients.  For example, when $q=7$ we have
\begin{equation}\label{phi7}
\Phi_7(X) = (X^3 + X^2 + 1)(X^3 + X+1).
\end{equation}
Because the second factor is traceless, we multiply both sides by $X-1$, absorb it into the first factor and obtain
\[
X^7-1 = (X^4 + X^2 + X + 1)(X^3 + X + 1).
\]
Finally, reversing coefficients (there's no need to ``negate'' since $1=-1$ in $\F_2$) we have
\begin{equation}\label{ex7}
X^7 - 1 = (X^3 + X^2 + 1)(X^4 + X^3 + X^2 + 1)
\end{equation}
in $\F_2[X;M]$, which is precisely the factorization appearing in \cite{cgk}.  Since $f(2,7)=3$ and $r(2,7)=2$, the factors in \eqref{ex7} are irreducible in $\F_2[X]$.  Because only one of these factors is traceless, it follows that \eqref{ex7} is the {\em only} factorization of $X^7-1$ in $\F_2[X;M]$ (up to the order in which the factors are written).  This demystifies the appearance of \eqref{ex7} in \cite{cgk}:  7 is simply the smallest prime for which 2 is not a primitive root, and there is only one traceless factorization of $X^7-1$ in $\F_2[X]$.

When $p=3$ and $q=11$ we have $f=5$ and $r=2$.  Factoring $\Phi_{11}$ over $\F_3$ yields
\[
\Phi_{11}(X) = (X^5 + X^4 - X^3 + X^2 -1 )(X^5 - X^3 + X^2 - X -1).
\]
The second factor is again traceless, so we multiply by $X-1$ and absorb it into the first factor:
\[
X^{11}-1 = (X^6 + X^4 - X^3 - X^2 - X + 1)(X^5 - X^3 + X^2 - X -1)
\]
in $\F_3[X]$.  Reversing coefficients and negating we have
\[
X^{11}-1 = (X^6 - X^5 - X^4 - X^3 + X^2 + 1)(X^5 + X^4 - X^3 + X^2 - 1)
\]
in $\F_3[X;M]$.  As in the preceding example, 11 is the smallest prime for which 3 fails to be a primitive root, and this is essentially the only factorization of $X^{11}-1$ in $\F_3[X;M]$ possible, which provides an explanation for its appearance in \cite{cgk}.

The smallest $q$ for which none of the irreducible factors of $\Phi_q$ in $\F_3[X]$ is traceless is $q=67$.  Here $f = 22$, $r=3$ and $\Phi_{67} = g_1 g_2 g_3$ in $\F_3[X]$ with $\operatorname{Tr}(g_1)=\operatorname{Tr}(g_2) = -1$ and $\operatorname{Tr}(g_3)=1$ (the interested reader can easily compute $g_1, g_2$ and $g_3$ explicitly with the help of her favorite computer algebra software).  Following the proof of Lemma \ref{boom}, we see that $g_2 g_3$ is a traceless factor of $\Phi_{67}$ over $\F_3$.  Multiplying by $(X-1)$, reversing coefficients and negating, we see that
\begin{equation}\label{ex67}
X^{67} - 1 = ((X-1)R(g_1))R(g_2 g_3)
\end{equation}
is a factorization of $X^{67}-1$ in $\F_3[X;M]$.  Because $X-1$, $g_1$, $g_2$ and $g_3$ are irreducible in $\F_3[X]$ but not traceless, \eqref{ex67} is a factorization into irreducibles in $\F_3[X;M]$.   Pairing $g_1$ with $g_3$ instead yields {\em different} irreducible factorization, which incidentally proves that $\F_3[X;M]$ is {\em not} a unique factorization domain.

\section{Number Fields}\label{fields}

When $p>3$, the condition $f>1$ is {\em not} sufficient to guarantee membership in $E(p)$.  For instance, $f(5,11)=5$ so that $r(5,11)=2$, but neither of the $2$ irreducible factors of $\Phi_{11}$ in $\F_5[X]$ is traceless.  So $11 \not \in E(5)$ by Lemma \ref{t2}.  We therefore partition $E(p)$ according to the value of the index $r$.  For $k \in \N$ set
\[
E_k(p) = \{q \text{ prime} \, | \, q \in E(p) \text{ and } r(p,q) = k \}. 
\]
When $r(p,q)=2$, notice that $q \in E(p)$ if and only if $\Phi_q$ has a traceless {\em irreducible} factor in $\F_p[X]$.  Regardless of the value of the index, according to \eqref{tt3} the latter statement holds if and only if there is a $q$th root of unity $\zeta \ne 1$ in an extension of $\F_p$ which satisfies $T_{p,f}(\zeta) = 0$.  Because the nontrivial $q$th roots of unity are precisely the roots of $\Phi_q$, this can happen if and only if $T_{p,f}$ and $\Phi_q$ have a nontrivial (positive degree) common factor in  $\F_p[X]$.  Together with Lemma \ref{t2} this proves:

\begin{lemma}\label{t3}
For any primes $p \ne q$, $\Phi_q$ has a traceless irreducible factor in $\F_p[X]$ if and only if $\operatorname{gcd}(\Phi_q,T_{p,f}) \in \F_p[X]$ is nontrivial.  In this case $X^q-1$ is reducible in $\F_p[X;M]$.
\end{lemma}

Let $G_{p,q} = \operatorname{gcd}(\Phi_q,T_{p,f}) \in \F_p[X]$.  We now see that
\[
E_2(p) = \{ q \text{ prime} \, | \, r(p,q) = 2 \text{ and }  \deg G_{p,q} > 0  \}.
\]
When $p$ is odd and $r(p,q) \ge 3$, it is {\em not} generally the case that $q \in E(p)$ implies $\deg G_{p,q}>0$, as our example with $p=3$ and $q=67$ demonstrates.  It is interesting to note that when $p=2$, however, we have:

\begin{cor}
Let $q$ be an odd prime.  Then $X^q - 1$ is reducible in $\F_2[X;M]$ if and only if $\gcd(\Phi_q,T_{2,f}) \in \F_2[X]$ is nontrivial.
\end{cor}

\begin{proof}
Because of Lemma \ref{t3}, it suffices to prove the ``only if'' implication.  Suppose $X^q - 1$ is reducible in $\F_2[X;M]$.  Then $\Phi_q$ is reducible in $\F_2[X]$ by Lemma \ref{t2}, and Lemma \ref{t5} implies that $\operatorname{Tr}(g) \ne 1$ for some irreducible factor $g \in \F_2[X]$ of $\Phi_q$.  Because the only other possible value for the trace is 0, the conclusion now follows from Lemma \ref{t3}.
\end{proof}

Our goal now is to find an alternate characterization of the condition $\deg G_{p,q} > 0$ in terms of algebraic number theory.  We assume throughout that $p$ is odd.  Although we have already considered the case $p=3$, there is no disadvantage to including it here.  The condition that $\Phi_q$ and $T_{p,f}$ have a nontrivial common factor over $\F_p$ is equivalent to the statement that $T_{p,f}$ fails to be a unit in the quotient ring $\F_p[X]/(\Phi_q)$.  Let $\zeta_q \in \C$ denote a primitive $q$th root of unity and set $K_q = \Q(\zeta_q)$.  Let $\mathcal{O}_q = \Z[\zeta_q]$ denote the ring of integers in $K_q$.  Because $\Phi_q$ is the minimal polynomial of $\zeta_q$ over $\Q$, we have natural isomorphisms
\[
\F_p[X]/(\Phi_q) \cong \left((\Z/p\Z)[X] \right) / (\Phi_q) \cong \Z[X]/(p,\Phi_q) \cong \left( \Z[X]/(\Phi_q)\right) /(p) \cong \mathcal{O}_q /(p),
\]
under which (the coset of) $X$ corresponds to (the coset of) $\zeta_q$.  We immediately find that $T_{p,f}$ is not a unit in $\F_p[X]/(\Phi_q)$ if and only if $\alpha_q=T_{p,f}(\zeta_q)$ is divisible by one of the primes of $\mathcal{O}_q$ lying over $p$.  This occurs if and only if $p$ divides the (ideal) norm $\mathcal{N}_q(\alpha_q \mathcal{O}_q)= \left| \mathcal{O}_q/\alpha_q \mathcal{O}_q \right|$.

We can go a bit further if we introduce some Galois theory.  The extension $K_q/\Q$ is Galois with group isomorphic to $(\Z/q\Z)^{\times}$.  Explicitly, an element $a \in (\Z/q\Z)^{\times}$ corresponds to the automorphism induced by $\zeta_q \mapsto \zeta_q^a$.  If $a_1, \ldots, a_r$ are coset representatives for $\langle p \rangle$ in $(\Z/q\Z)^{\times}$, where $r = \frac{q-1}{f}$, then the $T_{p,f}(\zeta_q^{a_i})$ are the distinct conjugates of $\alpha_q$ over $\Q$.  It follows that
\[
m_q(X) = \prod_{i=1}^r (X - T_{p,f}(\zeta_q^{a_i})) \in \Z[X]
\]
is the minimal polynomial for $\alpha_q$ over $\Q$ (the coefficients are integral because $\alpha_q$ is an algebraic integer).  Up to a sign, the constant coefficient of $m_q(X)$ is equal to the {\em (algebraic) norm}
\[
N_q(\alpha_q) = N_{\Q(\alpha_q)/\Q}(\alpha_q) = \prod_{i=1}^r T_{p,f}(\zeta_q^{a_i}).
\]
Because $[K_q : \Q(\alpha_q)] = \frac{q-1}{r} = f$, the two notions of norm are related by the equation
\[
\mathcal{N}_q(\alpha_q \mathcal{O}_q) = N_q(\alpha_q)^f.
\]
This means that $p$ divides $\mathcal{N}_q(\alpha_q \mathcal{O}_q)$ if and only if $p$ divides $N_q(\alpha_q)$.  The preceding paragraph and Lemma \ref{t3} now yield:

\begin{lemma}\label{t4}
For any odd primes $p \ne q$, $\deg G_{p,q} > 0$ if and only if $p|N_q(\alpha_q)$. 
\end{lemma}

Lemmas \ref{t2}, \ref{t3} and \ref{t4} yield
\[
E_2(p) = \{ q \text{ prime} \, | \, r(p,q)=2 \text{ and }  p| N_q(\alpha_q) \}.
\]
If $q \in E_2(p)$, then $\alpha_q$ has a single distinct Galois conjugate $\beta_q$ and
\begin{equation}\label{trace}
\alpha_q + \beta_q = \sum_{a \in (\Z/q\Z)^{\times}} \zeta_q^a = -1,
\end{equation}
since $\Phi_q(\zeta_q)=0$.  Equation \eqref{trace} has two important consequences.  Because $\mathcal{O}_q = \Z[\zeta_q]$ and $\Q(\alpha_q) = K_q^{\langle p \rangle}$, it is not hard to show that the ring of integers in $\Q(\alpha_q) = \Q(\alpha_q,\beta_q)$ is $\Z[\alpha_q, \beta_q] = \Z[\alpha_q]$, by \eqref{trace}.  This means that $\operatorname{disc}(\Q(\alpha_q)) = \operatorname{disc}(\alpha_q)$. Furthermore, the minimal polynomial for $\alpha_q$ over $\Q$ is
\[
m_q(X) = (X-\alpha_q)(X-\beta_q) = X^2 + X + N_q(\alpha_q),
\]
which means that
\[
\operatorname{disc}(\alpha_q) = \operatorname{disc}(X^2 + X + N_q(\alpha_q)) = 1 - 4 N_q(\alpha_q).
\]

For odd $q$, the unique quadratic subfield of $K_q$ is known to be $\Q\left(\sqrt{q^{\ast}} \right)$, where $q^{\ast} = (-1)^{\frac{q-1}{2}} q$.  Since $\Q(\alpha_q)$ also has degree 2 over $\Q$, we must have $\Q(\sqrt{q^{\ast}}) = \Q(\alpha_q)$.  In particular
\[
q^{\ast} = \operatorname{disc}  \Q\left(\sqrt{q^{\ast}} \right)  = \operatorname{disc} \Q(\alpha_q) = \operatorname{disc}(\alpha_q) = 1 - 4 N_q(\alpha_q).
\]
Therefore $p | N_q(\alpha_q)$ if and only if $q^{\ast} \equiv 1 \pmod{4p}$.  Since $p$ is odd this congruence is equivalent to the system $q^{\ast} \equiv 1 \pmod 4$ and $q^{\ast} \equiv 1 \pmod{p}$, by the Chinese remainder theorem.  The condition $q^{\ast} \equiv 1 \pmod{4}$ is automatic, while $q^{\ast} \equiv 1 \pmod{p}$ is equivalent to
\begin{equation}\label{gummo}
q \equiv \pm 1 \!\!\! \pmod{4p}.
\end{equation}
We have therefore proven:

\begin{lemma}\label{yup}
For any odd prime $p$, 
\[
E_2(p) = \left\{ q \text{ prime} \, \Big| \, q \equiv \pm 1 \!\!\! \pmod{4p}, f =   \frac{q-1}{2}  \right\}.
\]
\end{lemma}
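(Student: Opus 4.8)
The plan is to show that every $q \in E_2'(p)$ satisfies the hypothesis $p \mid N_q(\alpha_q)$ of Lemma 3, which immediately places $q$ in $E(p)$. So I would fix a prime $q$ with $q \equiv \pm 1 \pmod{4p}$ and $f = (q-1)/2$, first noting that the congruence forces $q \ne p$ (since $p \not\equiv \pm 1 \pmod{4p}$ when $p \ge 2$), so that Lemma 3 will apply once the divisibility is in hand. Since the index is $r = (q-1)/f = 2$, the element $\alpha_q$ has a single nontrivial conjugate $\beta_q$, and \eqref{trace} gives $\alpha_q + \beta_q = -1$; hence the minimal polynomial of $\alpha_q$ is $m_q(X) = X^2 + X + N_q(\alpha_q)$, with discriminant $\operatorname{disc}(\alpha_q) = 1 - 4N_q(\alpha_q)$.

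Next I would compute this discriminant a second way, as a field discriminant. Since $\operatorname{Gal}(K_q/\Q) \cong (\Z/q\Z)^{\times}$ is cyclic, $\Q(\alpha_q) = K_q^{\langle p\rangle}$ is the unique quadratic subfield of $K_q$, namely $\Q(\sqrt{q^{\ast}})$ with $q^{\ast} = (-1)^{(q-1)/2} q$; because $q^{\ast} \equiv 1 \pmod 4$, its field discriminant is exactly $q^{\ast}$. The one nontrivial point is that $\Z[\alpha_q]$ is the full ring of integers of $\Q(\alpha_q)$, for only then does $\operatorname{disc}(\alpha_q)$ equal the field discriminant. Granting this, equating the two evaluations yields the key identity $q^{\ast} = 1 - 4N_q(\alpha_q)$.

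From the identity the remainder is bookkeeping: $p \mid N_q(\alpha_q)$ if and only if $4p \mid 4N_q(\alpha_q) = 1 - q^{\ast}$, i.e. if and only if $q^{\ast} \equiv 1 \pmod{4p}$. Since $q^{\ast} \equiv 1 \pmod 4$ automatically, splitting on the residue of $q$ modulo $4$ (applying CRT when $p$ is odd, and a direct check modulo $8$ when $p = 2$) shows that $q^{\ast} \equiv 1 \pmod{4p}$ is equivalent to \eqref{gummo}, the condition $q \equiv \pm 1 \pmod{4p}$. This is exactly the defining congruence of $E_2'(p)$, so every such $q$ has $p \mid N_q(\alpha_q)$, and Lemma 3 places it in $E(p)$.

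I expect the main obstacle to be the integral-basis claim $\mathcal{O}_{\Q(\alpha_q)} = \Z[\alpha_q]$, which is what lets me replace $\operatorname{disc}(\alpha_q)$ by the field discriminant. In general these differ by the square of the index of the order generated by the periods $T_q(\zeta_q^{a_i})$, and controlling that index is precisely the difficulty flagged for larger $r$. For $r = 2$ I would invoke the classical fact that the Gaussian periods form an integral basis for the subfields of a prime cyclotomic field: here the two periods are $\alpha_q$ and $\beta_q$, and the relation $\alpha_q + \beta_q = -1$ from \eqref{trace} collapses $\Z\alpha_q + \Z\beta_q$ onto $\Z\alpha_q + \Z = \Z[\alpha_q]$ by a unimodular change of basis, giving the required equality.
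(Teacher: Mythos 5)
Your proposal is correct and follows essentially the same route as the paper: reduce to showing $p \mid N_q(\alpha_q)$, compute $\operatorname{disc}(\alpha_q) = 1 - 4N_q(\alpha_q)$ from the minimal polynomial $X^2 + X + N_q(\alpha_q)$ via the trace relation $\alpha_q + \beta_q = -1$, identify $\Q(\alpha_q)$ with $\Q(\sqrt{q^{\ast}})$ to get $q^{\ast} = 1 - 4N_q(\alpha_q)$, and translate $q^{\ast} \equiv 1 \pmod{4p}$ into $q \equiv \pm 1 \pmod{4p}$. You in fact supply slightly more justification than the paper for the integral-basis claim $\mathcal{O}_{\Q(\alpha_q)} = \Z[\alpha_q]$, which the paper only asserts in passing.
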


The condition $\frac{q-1}{f}=2$ implies that $\left( \frac{p}{q} \right) = 1$, and one can use quadratic reciprocity to show that this implies $q \equiv \pm 1 \pmod{4p}$ when $p=2,3$.  The expression for $E_2(p)$ given in Lemma \ref{yup} is therefore valid for {\em all} primes $p$.

\section{$E_2(p)$ and a generalized Artin conjecture}\label{genartin}

We continue to assume that $p$ is odd.  The set $E_2(p)$ falls into a wider class of sets of primes whose densities were investigated by Lenstra in \cite{lenstra}.   Such sets arose in the work of several authors \cite{cw, q1, q2, sam, wein} in the 1970s, in connection with certain properties of euclidean rings in global fields. Although these sets of primes (places) can be constructed in an arbitrary global field $K$, for our purposes it is sufficient to simply take $K=\Q$.

Given a prime number $q$ and an integer $n$, let $\nu_{q}^{\times} : \Q \to \Z$ denote the $q$-adic valuation (with $\nu_q(0) = \infty$), and let
\[
R_q = \{ a \in \Q \, | \, \nu_{q}(a) \ge 0 \}  \,\,\, \text{ and } \,\,\, R_q^{\times} = \{a \in \Q \, | \, \nu_q(a)=0 \} 
\]
be the valuation ring and unit group at $q$, respectively.  There is a natural surjection $\psi: R_q^{\times} \to (\Z/q\Z)^{\times}$ obtained by composing reduction modulo $qR_q$ with the isomorphism $R_q /qR_q \cong \Z/q\Z$.  For a subgroup $W \subseteq R_{q}^{\times}$, we let $r_{q}(W)$ denote the index of $\psi(W)$ in $(\Z/q\Z)^{\times}$.  

If $F$ is a finite Galois extension of $\Q$, $C \subseteq G(F/\Q)$ is a union of conjugacy classes, $W \subset \Q^{\times}$ is a finitely generated subgroup with positive rank, and $k$ is a positive integer, let $\mathcal{S}(F,C,W,k)$ denote the set of primes $q$ so that
\[
\begin{aligned}
\left( \frac{F/\Q}{q} \right) \subseteq C, \,\,\,\, W \subseteq R_{q}^{\times}, \,\,\,\,\text{and} \,\,\,\, r_{q}(W) | k,\\[5pt]
\end{aligned}
\]
where $\left( \frac{F/\Q}{q} \right)$ is the Frobenius symbol.  The density of $\mathcal{S}(F,C,W,k)$ is the subject of the following conjecture, which is a combination of \cite[Conjecture (2.3)]{lenstra} and \cite[(2.15)]{lenstra}.

\begin{conj}[Lenstra]\label{density}
Under the hypotheses stated above, the natural density of $\mathcal{S}(F,C,W,k)$ exists and is equal to
\begin{equation}\label{dense}
\delta(F,C,W,k) = \sum_n \frac{\mu(n) c(n)}{[F \cdot L_{n} : \Q]},
\end{equation}
where for squarefree $n$, $L_n = \Q(\zeta_{q(n)},W^{1/q(n)})$ with $q(n) = \prod_{\ell | n} \ell^{\nu_{\ell}(k)+1}$, and $c(n) = \# (C \cap G(F/F\cap L_n))$. 
\end{conj}

Because $\mathcal{S}(a) = \mathcal{S}(\Q, \{1\}, \langle a \rangle, 1)$, Conjecture \ref{density} subsumes Artin's primitive root conjecture, and is therefore commonly referred to as a {\em generalized Artin conjecture}.  Various other generalizations exist; see for example \cite{ck, fm, mp, stadnik, wag}.  We remark that because Lenstra \cite{lenstra} works with sets of primes in an arbitrary global field, his results are all actually stated in terms of {\em Dirichlet} density.  However, if we are only working with primes in a number field, then we are free to use natural density in place of Dirichlet density (see \cite[p. 203]{lenstra}).

Set $F = \Q(\zeta_{4p})$.  Identify $G(F/\Q)$ with $(\Z/4p\Z)^{\times}$ in the usual manner, mapping $a \in (\Z/q\Z)^{\times}$ to the automorphism $\sigma_a$ defined by the rule $\zeta_{4p} \mapsto \zeta_{4p}^a$.  We claim that
\begin{equation}\label{another}
E_2(p) = \mathcal{S}(F,\{ \pm 1\}, \langle p \rangle , 2).
\end{equation}
Since $\left( \frac{F/\Q}{q} \right) = \{ q \}$ and $r_q(\langle p \rangle) = \frac{q-1}{f}$, the only question is why membership in $\mathcal{S}(F,\{ \pm 1\}, \langle p \rangle, 2)$ guarantees that $\frac{q-1}{f}=2$.  To that end, let $q \in \mathcal{S}(F,\{ \pm 1 \},\langle p \rangle,2)$.  Since $p$ is odd this implies $q^{\ast} \equiv 1 \pmod{p}$, and quadratic reciprocity then gives
\[
\left( \frac{p}{q} \right) = \left( \frac{q^{\ast}}{p} \right) = 1.
\]
We conclude that $\langle p \rangle$ is a subgroup of the group of squares in $(\Z/q\Z)^{\times}$.  Because the squares have index 2 in $(\Z/q\Z)^{\times}$, this implies $\frac{q-1}{f}$ is divisible by 2.  But we also have that $r_q(\langle p \rangle)$ divides 2.  Since $r_q(\langle p \rangle) = \frac{q-1}{f}$, we are finished.

Let $a(p) = \delta(\Q(\zeta_{4p}),\{ \pm 1 \}, \langle p \rangle, 2 )$.  We then have:

\begin{lemma}\label{bluey}
If Conjecture \ref{density} is true, then for any odd prime the set $E_2(p)$ has natural density $a(p)$.  
\end{lemma}

We will return to the truth of Conjecture \ref{density} following the next section.

\section{Nonvanishing of $a(p)$}\label{apnonzero}

Note that Conjecture \ref{density} only asserts that the density of $\mathcal{S}(F,C,W,k)$ {\em exists}.  Whether or not $\delta(F,C,W,k)$ is {\em nonzero} must be addressed separately.  The following result allows us to answer this question without explicitly computing $\delta(F, C, W, k)$.

\begin{lemma}[{\cite[Theorem (4.1)]{lenstra}}]\label{nonzero}
Let $h$ be the product of the primes $\ell$ so that $W \subseteq (\Q^{\times})^{ q(\ell)}$.  Then $\delta(F,C,W,k) \ne 0$ if and only if there is a $\sigma \in G(F(\zeta_h)/\Q)$ so that
\begin{align}
&\sigma|F \in C, \label{c1} \\[4pt]
&\sigma|L_{\ell} \ne \operatorname{id} \text{ whenever } L_{\ell} \subset F(\zeta_h). \label{c2}
\end{align}
\end{lemma}

Because it is a potential source of confusion, we point out that the variable $p$ appearing in the statement of the original \cite[Theorem (4.1)]{lenstra} represents the characteristic of the global field in question, which in our case is simply 0.  It is not ``our'' $p$.  

\begin{lemma}\label{fuckthis}
For any odd prime $p$ one has $a(p) \ne 0$.
\end{lemma}

\begin{proof}
Taking $F=\Q(\zeta_{4p})$, $C= \{ \pm 1\}$, $W= \langle p \rangle$ and $k=2$ in Lemma \ref{nonzero}, we have $h=1$, since $p$ is not an $m$th power in $\Q$ for any $m \ge 2$.  So $F(\zeta_h) = F$ and from \eqref{c1} and \eqref{c2} we find that $a(p) \ne 0$ if and only if there is a $\sigma \in \{ \pm 1 \}$ so that 
\begin{equation}\label{good}
\sigma|L_{\ell} \ne \operatorname{id} \text{ whenever } L_{\ell} \subseteq F.
\end{equation}
However, $L_{\ell}$ contains the subextension $\Q(p^{1/\ell})$, which fails to be Galois over $\Q$.  Since $F/\Q$ is abelian, this implies $L_{\ell}$ is {\em never} contained in $F$.  So \eqref{good} is vacuously satisfied for any $\sigma \in G(F/\Q)$, and hence $a(p) \ne 0$.  
\end{proof}

We can also show that $a(p)$ is nonzero directly, by expressing it as the product of a certain nonzero rational number and Artin's constant $A$.  In \cite{wag} Wagstaff obtained similar expressions for the densities of the sets
\[
\mathcal{S}(a ,t) = \{ q \text{ prime} \, | \,  \langle a \rangle \text{ has index } t \text{ in } (\Z/q\Z)^{\times}   \},
\]
to which Conjecture \ref{density} also applies (see \cite[p.\ 216]{lenstra}).  When $a>0$ or $a \ne \pm \square$, Wagstaff was also able to give an Euler product expansion for $d(\mathcal{S}(a,t))$.  Later Moree \cite{moree} extended Wagstaff's results to obtain Euler products for every $d(\mathcal{S}(a,t))$.  However $E_2(p)\ne \mathcal{S}(p,2)$ when $p \ge 5$, since in $\mathcal{S}(p,2)$ one has no control over the residue classes of its members.  So the results of Moree and Wagstaff can't help us compute $a(p)$.  Instead, we find a closed form expression for the series \eqref{dense} defining $a(p) = \delta(\Q(\zeta_{4p}),\{ \pm 1 \}, \langle p \rangle, 2)$.

\begin{lemma}\label{ap}
For any odd prime $p$,
\begin{equation}\label{product}
a(p) = \frac{3(2p-1)}{4(p^2-p-1)}A,
\end{equation}
where $A$ is Artin's constant.  In particular $a(p) \ne 0$.
\end{lemma}

\begin{proof}
With $F=\Q(\zeta_{4p})$, $C= \{ \pm 1\}$, $W = \langle p \rangle$ and $k=2$ in Section 6, we have
\[
L_n = \begin{cases} \Q(\zeta_n, p^{1/n}) & \text{ if } 2 \nmid n,\\ \Q(\zeta_{2n},p^{1/2n}) & \text{ otherwise,} \end{cases}
\]
so that
\[
c(n) =  \begin{cases} 2 & \text{ if } F \cap L_n \subseteq \mathbb{R}, \\1 & \text{ otherwise} \end{cases} = \begin{cases} 2 & \text{ if }(2p,n)=1, \\ 1 & \text{ otherwise.}\end{cases}
\]
Taking advantage of \cite[Theorem VI.9.4]{lang} we also find that
\[
[F \cdot L_n : \Q] = \begin{cases} 2 n \varphi(n) (p-1) & (p,n)=1,\\[2pt] 2 n \varphi(n) & p|n \text{ and } 2 \nmid n,\\[2pt] 4 n \varphi(n) & 2p|n. \end{cases}
\]
It then follows from \eqref{dense} that
\[
a(p) = \frac{1}{2} \sum_{p|n \atop 2 \nmid n} \frac{\mu(n)}{ n \varphi(n)}+ \frac{1}{4}\sum_{2p|n} \frac{\mu(n)}{ n \varphi(n)} +\frac{1}{2(p-1)} \sum_{p \nmid n \atop 2 | n} \frac{\mu(n)}{n \varphi(n)} + \frac{1}{p-1} \sum_{(2p,n)=1} \frac{\mu(n)}{n \varphi(n)}.
\]
Because $\mu$ is multiplicative,
\[
\sum_{(2p,n)=1} \frac{\mu(n)}{n \varphi(n)} = \prod_{\ell \text{ prime} \atop \ell \ne 2,p} \left( 1 - \frac{1}{\ell(\ell-1)}\right) = 2 \frac{p(p - 1)}{p^2 - p - 1} A.
\]
Therefore
\[
\frac{1}{2} \sum_{p|n \atop 2 \nmid n} \frac{\mu(n)}{ n \varphi(n)} = \frac{1}{2} \sum_{(2p,m)=1} \frac{\mu(mp)}{ m\, p \, \varphi(mp)} = - \frac{1}{p(p-1)} \frac{p(p - 1)}{p^2 - p - 1} A = - \frac{1}{p^2 - p - 1} A.
\]
Likewise
\[
\frac{1}{4}\sum_{2p|n} \frac{\mu(n)}{ n \varphi(n)} = \frac{1}{4} \sum_{(2p,m)=1} \frac{\mu(2pm)}{2pm \varphi(2pm)} = \frac{1}{4p(p-1)}  \frac{p(p - 1)}{p^2 - p - 1} A  = \frac{1}{4(p^2 - p - 1)} A.
\]
Finally,
\[
\frac{1}{2(p-1)}\sum_{p \nmid n \atop 2 | n} \frac{\mu(n)}{n \varphi(n)} = \frac{1}{2(p-1)} \sum_{(2p,m)=1} \frac{\mu(2m)}{2m \varphi(2m)} = -\frac{1}{2(p-1)}  \frac{p(p - 1)}{p^2 - p - 1} A = - \frac{p}{2(p^2 - p - 1)} A.
\]
Formula \eqref{product} follows immedately.
\end{proof}

\begin{proof}[Proof of Theorem \ref{main2}]  According to \cite[Theorem (3.1)]{lenstra}, the generalized Artin Conjecture \ref{density} is true under the assumption of the generalized Riemann hypothesis for the Dedekind zeta functions of the number fields $L_n$.  And in this case Lemma \ref{ap} implies that for any odd prime $p$ one has
\[
d(E_2(p)) = a(p) = \frac{3(2p-1)}{4(p^2 - p - 1)}A \ne 0.
\]
\end{proof}

\section{Remarks}\label{remarks}

Empirical evidence appears to support our GRH-conditional result that $d(E_2(p)) = a(p)$.  Table \ref{data} compares the approximate density $\# (E_2(p) \cap [1,10^6]) / \#( \mathcal{P} \cap [1,10^6])$ to $a(p)$, for the first ten values of $p$.  It is interesting to note that for $p<100$, the percentage error in the approximate density is substantially larger for $p=3$ than it is for other primes.  This appears to persist as we increase the size of the primes included in the approximation.  It might be interesting to know the source of this phenomenon.

\begin{table}[h]
\caption{Approximate and conjectural densities of $E_2(p)$}
\label{data}
\begin{center}
\begin{tabular}{c|c|c}
$p$ & $\pi_p(10^6)/\pi(10^6)$ & $a(p)$ \\[2pt] \hline 
2 & 0.28143 & 0.28046\\[2pt]
3 & 0.30052 & 0.28046\\[2pt]
5 & 0.13815 & 0.13285\\[2pt]
7 & 0.09112 & 0.08892 \\[2pt]
11 & 0.05461 & 0.05403 \\[2pt]
13 & 0.04635 & 0.04523 \\[2pt]
17 & 0.03448 & 0.03415 \\[2pt]
19 & 0.03076 & 0.03043 \\[2pt]
 23 & 0.02563 & 0.02499 \\[2pt]
 29 & 0.01949 & 0.01971 
\end{tabular}
\end{center}
\end{table}

For $p > 3$, determining membership in $E(p)$ in general appears to be somewhat challenging.  The condition $\deg \gcd(\Phi_q,T_{p,q}) >0 $ (remember that the GCD is being computed over $\F_p$) is easy enough to test with a computer algebra system (once one recognizes that $\gcd(\Phi_q,T_{p,q} ) = \gcd(\Phi_q,\overline{T_{p,q}})$).  But as we have seen, this will only detect those $q \in E(p)$ for which $\Phi_q$ has a traceless {\em irreducible} factor, and these $q$ comprise a proper subset $E'(p)$ of the set $E(p)$ in which we are we are interested.  It would be interesting to know the precise extent to which $E'(p)$ and $E(p)$ differ.  

To that end, for $r \ge 2$ let 
\[
I_r(p) = \{ q \text{ prime } \, | \, [(\Z/q\Z)^{\times}: \langle p \rangle] = r \},
\] 
so that $E_r(p) \subseteq I_r(p)$.  A simple combinatorial argument shows that for any fixed $p$, $I_r(p) = E_r(p)$ for all sufficiently large $r$.  So the set 
\[
F(p) = \{ r \ge 2 \, | \, E_r(p) \subsetneq I_r(p) \}
\]
is finite and by our results contains $r=2$ for $p \ge 5$.  Since
\[
E(p) = \left( \bigcup_{r \in F(p)} E_r(p) \right) \cup \left(\bigcup_{r \not \in F(p)} I_r(p) \right) ,
\]
it would therefore be useful to determine $F(p)$ completely, and this seems necessary in order to achieve a more thorough understanding of $E(p)$ for $p \ge 5$.  

For example, one can show (using techniques generalizing those in Section \ref{fields}) that 
\[
I_3(5) = E_3(5),
\]
proving that $3 \not \in F(5)$, and computational evidence suggests that $r \not \in F(5)$ for all $r \ge 3$.  This would mean that $F(5) = \{2\}$ and hence
\[
E(5) = E_2(5) \cup \bigcup_{r \ge 3} I_r(5),
\]
making computation of the density of $E(5)$ amenable to the generalized Artin Conjecture \ref{density}.  Our computations have also shown that $3 \in F(7)$, and also suggest that $r \not \in F(7)$ for $r \ge 4$.  This yields the analogous conjectural decomposition
\[
E(7) = E_2(7) \cup E_3(7) \cup \bigcup_{r \ge 4} I_4(7),
\]
which can be used to compute a GRH-conditional value for $d(E(7))$ as well, provided one can also find a ``nice'' description of $E_3(7)$, along the lines of that given for $E_2(p)$ in Lemma \ref{yup}.  Determining the set $F(p)$ for $p \ge 5$, and describing the sets $E_r(p)$ for $r \in F(p)$, are indeed the subjects of our continuing research.

\section{Acknowledgements}

I am indebted to my friend Nathan Jones of the University of Illinois at Chicago for the suggestion to decompose $E(p)$ according to the index $r$.  I am also extremely grateful to Carl Pomerance for useful comments on an earlier draft of this paper which greatly improved our exposition here.

\end{document}